\newcommand{\levy}{L\'{e}vy }
\newcommand{\p}{{\mathbb P}}
\newcommand{\e}{{\mathbb E}}
\newcommand{\D}{{\mathrm d}}
\newcommand{\R}{{\mathbb R}}
\newcommand{\1}[1]{\mbox{\rm  1}_{\{#1\}}}
\newcommand{\tx}[1]{\tau_{\{#1\}}}
\renewcommand{\a}{{\alpha}}
\newcommand{\bs}{\boldsymbol}
\newcommand{\one}{{\bs 1}}
\newcommand{\matI}{\mathbb{I}}
\newcommand{\matO}{\mathbb{O}}
\newcommand{\bpi}{{\bs \pi}}
\newcommand{\Dpi}{\Delta_\bpi}
\newtheorem{theorem}{Theorem}
\newtheorem{lemma}{Lemma}
\newtheorem{prop}{Proposition}
\newtheorem{cor}{Corollary}
\begin{document}

\begin{frontmatter}

\title{Potential measures of one-sided Markov additive processes with reflecting and terminating barriers}
\runtitle{Potential measures of MAPs}

\begin{aug}
  \author{\fnms{Jevgenijs}  \snm{Ivanovs}\ead[label=e1]{jevgenijs.ivanovs@unil.ch}},

  \runauthor{J. Ivanovs}

  \affiliation{Department of Actuarial Science, Faculty of Business and Economics, University of Lausanne}

  \address{University of Lausanne, Quartier UNIL-Dorigny, 1015 Lausanne, Switzerland,\\ 
          \printead{e1}}
\end{aug}

\begin{abstract}
Consider a one-sided Markov additive process with an upper and a lower barrier, where each can be either reflecting or terminating.
For both defective and non-defective processes and all possible scenarios we identify the corresponding potential measures,
which generalizes a number of results for one-sided \levy processes.
The resulting rather neat formulas have various applications, and in particular they lead to quasi-stationary distributions of the corresponding processes.
\end{abstract}

\begin{keyword}[class=MSC]
\kwd{60G51}
\kwd{60J25} \kwd{60J45}
\end{keyword}

\begin{keyword}
\kwd{Markov modulation}
\kwd{exit from an interval}
\kwd{transient analysis}
\kwd{quasi-stationary distributions}
\end{keyword}

\end{frontmatter}

\maketitle

\section{Introduction}
A Markov additive process (MAP) in continuous time is a natural generalization of a \levy process with various applications in queueing, risk theory and financial mathematics, see e.g.~\cite{APQ}.
It can be seen as a \levy process in Markov environment, which provides rich modeling possibilities.

The main exit problems for spectrally negative MAPs are solved in~\cite{ivanovs_scale}, including one-sided and two-sided exit, as well as exit for reflected processes. 
Following these developments we consider a spectrally negative MAP with two barriers (upper and lower), where each can be either terminating or reflecting, 
and identify the corresponding potential measures (also known as resolvent measures). 
It is assumed that a process is stopped (killed) upon its passage over a terminating barrier, i.e.\ above an upper or below a lower terminating barrier.
Also it is allowed to place barriers at $\infty$ and $-\infty$, which corresponds to a model with less than two barriers.

Potential measures have numerous applications as can be anticipated from the general theory of Markov processes. 
In the case of spectrally negative \levy processes various potential measures can be given in an explicit form, see~\cite[Sec.\ 8.4]{kyprianou} and references therein.
It should also be noted that potential measures and their densities readily lead to the distribution of the corresponding process at an exponential time; 
more can be done in the MAP setting which will be discussed in the following. 
Recent progress in the theory of spectrally negative MAPs allows us to present rather neat formulas and short proofs.

We formulate the problem and discuss the relation between potential measures and quasi-stationary distributions in Section~\ref{sec:prelim}.
Section~\ref{sec:exit} reviews some basic results from the exit theory of MAPs, and defines related matrices and matrix-valued functions.
Occupation densities, reviewed in Section~\ref{sec:occ_dens}, constitute the main tool for deriving potential densities in case of terminating barriers, see Section~\ref{sec:terminating}.
Time-reversal, discussed in Section~\ref{sec:time}, is the main tool used in Section~\ref{sec:reflecting} in deriving potential densities for reflected processes.
The above results are obtained for defective processes, which are extended to non-defective ones in Section~\ref{sec:non_def}. Some concluding remarks are given in Section~\ref{sec:remarks}.

The results on potential measures (and hence on quasi-stationary distributions) are spread among Theorem~\ref{thm:absorbing}, Theorem~\ref{thm:reflecting}, Corollary~\ref{cor:reflecting} and Corollary~\ref{cor:limits}.
In addition, Corollary~\ref{cor:lim_distr} presents stationary distributions. Finally, this paper contains some useful identities concerning time-reversed process and various limits.
This paper complements~\cite{runhuan}, where some of the potential measures were identified using a very different approach.
Some formulas may look remarkably different, which is a known issue with MAPs.

\section{Preliminaries}\label{sec:prelim}
A MAP is a bivariate Markov process $(X,J)=\{(X(t),J(t)):t\geq 0\}$ where $X$ is an additive level component and $J$ represents the environment, see~\cite[Sec.\ XI.2a]{APQ}.
It is assumed that $(X,J)$ is adapted to some right-continuous, complete filtration $\{\mathcal F_t:t\geq 0\}$, 
and that $J$ is an irreducible Markov chain on a finite number of states, say $n$.
The defining property of a MAP states that for any $t\geq 0$ and any $i=1,\ldots,n$ conditionally on $\{J(t)=i\}$ the process
\[\{(X(t+s)-X(t),J(t+s)):s\geq 0\}\]
is independent of $\mathcal F_t$ and has the law of $\{(X(s)-X(0),J(s)):s\geq 0\}$ given $\{J(0)=i\}$. 
In other words, the environment governs the increments of the level process. Importantly, this property also holds when $t$ is replaced by any stopping time $\tau$;
note that $\{J(\tau)=i\}$ implicitly assumes that \mbox{$\tau<\infty$}.
We allow for defective (or killed) processes, i.e.\ we add an additional absorbing cemetery state to the environment without writing it explicitly.
Hence $\{J(t)=i\}$ also means that the process has survived up to time $t$. In the special case when $n=1$ one obtains a \levy process, which may be killed at an independent exponential time.

Putting to work Markov additive property requires repeated conditioning on the state of the environment, which leads to matrix algebra and justifies the following notation.
For a random variable $Y$ we write $\e_x[Y;J(t)]$ to denote an $n\times n$ matrix with $ij$-th element
\[\e_{x,i}(Y;J(t)=j)=\e(Y\1{J(t)=j}|X(0)=x,J(0)=i).\]
Similarly, for an event $A$ we write $\p_x[A,J(t)]=\e_x[{\rm 1}_A;J(t)]$ for the corresponding matrix of probabilities. Moreover, subscript $x$ can be omitted when $x=0$. 
Finally, the identity and the zero matrices are denoted by $\matI$ and $\matO$ respectively, and $\Delta_{\bs v}$ stands for a diagonal matrix with a vector $\bs v$ on the diagonal.

We can assume that $X(0)=0$ and the barriers are placed at $-a$ and at $b$, where
$a,b\in [0,\infty]$ (not simultaneously 0), because otherwise one can simply shift the picture.
We write $|-a,b|,|-a,b],[-a,b|,[-a,b]$ to denote different scenarios, where $|$ means termination and $[$ or $]$ mean reflection at the corresponding barriers.
In particular, $X_{[-a,b|}(t)$ is a process reflected at $-a$ and terminated at $b$, i.e.\ upon exiting the interval $(-\infty,b]$.
In the case when a barrier is placed at $-\infty$ or $\infty$, i.e.\ there is no barrier, we write $(-\infty,\infty),(-\infty,b|,(-\infty,b],|-a,\infty),[-a,\infty)$ 
to denote all possible scenarios, where the first corresponds to a free process, the second means termination upon exiting the interval $(-\infty,b]$, and so forth.
For a rigorous definition of reflection, both one-sided and two-sided, see e.g.~\cite{APQ,kella_reflecting}. 
Here we only recall the simplest cases $X_{[0,\infty)}(t)=X(t)-\underline X(t)$ and $X_{(-\infty,0]}(t)=X(t)-\overline X(t)$, 
where $\underline X(t)$ and $\overline X(t)$ are the running infimum and supremum respectively,
and note that any reflecting barrier locally acts in a similar way, c.f.~\cite[Ch.\ XIV.3]{APQ}.

For each of the above scenarios $I$ (e.g.\ $I=|-a,b]$) we consider the corresponding potential measure (a matrix of measures)
\begin{align}\label{eq:potential}
 &U_{I}(A)=\int_0^\infty \p[X_I(t)\in A,J(t)]\D t=\left(\e_i\int_0^\infty \1{X_I(t)\in A,J(t)=j}\D t\right),
\end{align}
where $A\in\mathcal B[-a,b]$. It turns out that in all the cases the measure $U_I(A)$ has a density $u_I(x)$ on $(a,b)$ with respect to Lebesgue measure. 
We identify this density and compute the point masses at $-a$ and $b$.
It will be shown that there is never a point mass at the level of a terminating barrier, and hence we only specify the point masses for the levels of reflecting barriers.
One of the possible uses of potential measures is given by the following basic formula
\begin{align}\label{eq:potential_appl}
 \e_i\int_0^\infty f(X(t),J(t))\D t=\sum_{j}\int_{\mathbb R}f(x,j)U_{ij}(\D x),
\end{align}
where $f\geq 0$ is a measurable function, which is equal to 0 for the cemetery state of the environment.

Recall that we allow for defective MAPs, in which case one can think that the time runs only up to the killing epoch (when $J$ enters the cemetery state).
A defective MAP can be seen as a non-defective MAP killed at some rate $q_i\geq 0$ while $J$ is in $i$ for all $i=1,\ldots,n$.
A special case arises when $q_i=q$ for all $i$, i.e.\ a MAP is killed at an independent exponential time $e_q$ of rate $q$.
Then
\begin{equation}\label{eq:intro1}
 \p[X_I(e_q)\in A,J(e_q)]=q\int_0^\infty e^{-qt}\p[X_I(t)\in A,J(t)]\D t=qU^q_{I}(A),
\end{equation}
where superscript $q$ is used to distinguish between objects corresponding to a MAP and its killed version.
This relation can be generalized in the following way. 
Let ${\bs q}=(q_1,\ldots,q_n)$ be a vector of killing rates and let $T$ be the killing time, i.e.\ the life-time of the Markov chain $J$, implying that $T$ has a corresponding phase type distribution.
Then a standard argument shows that 
\[\p[X_I(T)\in A,J(T)]=\int_0^\infty \p^{\bs q}[X_I(t)\in A,J(t)]\D t\Delta_{\bs q}=U^{\bs q}_{I}(A)\Delta_{\bs q}.\]
One can think about an independent Poissonian observer whose rate $q_i$ depends on the environment, then $U^{\bs q}_{I}(A)\Delta_{\bs q}$
gives the distribution of the process at his first observation epoch~$T$. 

\section{Exit theory review}\label{sec:exit}
Throughout this paper it is assumed that the level component $X$ has no positive jumps, 
and that for each $i$ the process $X$ given $\{J(0)=i\}$ visits $(0,\infty)$ before the switch of the environment with positive probability 
(none of the underlying \levy processes is a downward subordinator). The second assumption also appearing in~\cite{ivanovs_scale} allows to greatly simplify our
notation and to avoid unpleasant technical difficulties.
Let us briefly review the exit theory of such MAPs.

For $x\geq 0$ define the first passage times
\begin{align*}
&\tau_x^+=\inf\{t\geq 0:X(t)>x\}, &\tau_{-x}^-=\inf\{t\geq 0:X(t)<-x\}.
\end{align*}
In addition we define the first hitting time of a level $x\in\mathbb R$ by
\[\tx{x}=\inf\{t>0:X(t)=x\}.\]
It is known that $\tau_x^+\leq \tx{x}$ and $\tau_{-x}^-\leq \tx{-x}$ for $x\geq 0$ a.s., i.e.\ $X$ can not hit a level without immediately passing over it,
which can be seen from the small-time behavior of \levy processes, see also~\cite[Prop.\ 7]{ivanovs_scale}. 

There is a matrix-valued function $F(\a)$, which characterizes the law of the process $(X,J)$, and in particular $\e[e^{\a X(t)};J(t)]=e^{F(\a)t},\a,t\geq 0$. 
Note that $F(0)$ is the transition rate matrix of $J$, and hence our MAP is non-defective if and only if $F(0)\bs 1=\bs 0$, where $\bs 1$ and $\bs 0$ denote column vectors of 1s and 0s respectively.
In this case let a row vector $\bpi$ be the stationary distribution of~$J$, and let $\mu=\e_\bpi X(1)$ be the asymptotic drift.
According to $\mu< 0,\mu>0$ and $\mu=0$ the process $X(t)$ (as $t\rightarrow\infty$) tends to $-\infty$, tends to~$\infty$, and oscillates between $-\infty$ and $\infty$, see~\cite[Prop.\ XI.2.10]{APQ}. 

It is not difficult to see that $\{J(\tau_x^+),x\geq 0\}$ is a Markov chain with some transition rate matrix $G$, so that 
\[\p[J(\tau_x^+)]=e^{G x}, x\geq 0.\]
It is known that $G$ is a right solution (unique in a certain class) to a matrix integral equation $F(-G)=\matO$, see~\cite{breuer} for details.
We use the symbol $G$ to be consistent with the theory of discrete time skip-free upwards MAPs, see~\cite[Sec.\ XI.3]{APQ}.
In fact, there are numerous similarities between discrete and continuous time theories, which are explored in~\cite{links_discrete}.
Analogously to the discrete time case, one defines a matrix $R$ as the unique left solution to $F(-R)=\matO$, but see also~\eqref{eq:R}.
It is known that both $G$ and $R$ are non-singular, unless the MAP is non-defective and $\mu\geq 0$, in which case they both have a simple eigenvalue at 0.

Another important object in the exit theory for MAPs is a matrix-valued function $W(x)$, which is continuous on $[0,\infty)$ and is identified by the transform 
\[\int_0^\infty e^{-\a x}W(x)\D x=F(\a)^{-1}\] for large enough $\a$; we put $W(x)=\matO$ for $x<0$.
It holds that $W(x)$ is non-singular for $x>0$ and 
\begin{equation}\label{eq:Wexit}
 \p[\tau_b^+<\tau_{-a}^-;J(\tau_b^+)]=\p[\tau_b^+<\tx{-a};J(\tau_b^+)]=W(a)W(a+b)^{-1}.
\end{equation}
The reasons for the first equality are the following. 
Firstly $X$ can not hit a level $-a$ without immediately going below it, and secondly if $X$ jumps below $-a$ then it has to hit it on the way to $b$.

Let us discuss differentiability of $W(x)$ for $x>0$. 
In~\cite[Thm.\ 5]{ivanovs_scale} it is shown that both left and right derivatives, $W_-'(x)$ and $W_+'(x)$ respectively, exist for all $x>0$, but may not coincide at countably many points. 
Furthermore, this result can be used to show that $\lim_{y\uparrow x}W_+'(y)=W_-'(x)$, and that $W'_+(x)$ is Riemann integrable on any interval $[0,a]$. 
When $W'_+(x)$ enters into density representation, we simply write $W'(x)$.

Define yet another function
\[Z(x)=\matI-\int_0^xW(y)\D y F(0),\]
which is used in various identities, and in particular in
\begin{equation}\label{eq:exit_minus}
 \p[J(\tau_{-a}^-)]=Z(a)-W(a)R^{-1}F(0),
\end{equation}
where in the case of a non-defective process with $\mu\geq 0$ the term $R^{-1}F(0)$ should be interpreted in a limiting sense (by letting the killing rates approach~0), 
see~\eqref{eq:R} and~\cite{ivanovs_scale}.
In general, the function $Z$ is defined as a function of two parameters $\a$ and~$x$, but for the purpose of this paper the case of $\a=0$ is sufficient.

Let us finally remark about simplifications when one considers a \levy process, i.e.\ $J$ lives on a single state.
Firstly, all the matrices become scalars, and in particular $F(0)=-q$, where $q\geq 0$ is the killing rate.
Moreover, $G=R=-\Phi$, where $\Phi\geq 0$ is called the right-inverse of the Laplace exponent $F(\a)$, i.e.\ it solves $F(\Phi)=0$.
More precisely, $\Phi=0$ when $q=0$ and $\mu\geq 0$, and otherwise $\Phi$ is a unique positive zero of~$F(\a)$.

\section{Occupation densities}\label{sec:occ_dens}
Let us consider $L(x,j,t)$ the occupation density of $(X,J)$ at $(x,j)$ up to time $t$, which is defined in~\cite{ivanovs_scale} based on corresponding theory for \levy processes.
Firstly, $L(x,j,t)$ always exists under our assumptions. It is nonnegative, measurable in $x$, and for all $t\geq 0$ and all measurable $f\geq 0$ satisfies
\begin{equation}\label{eq:occ_density}
 \int_0^t f(X(s),J(s))\D s=\sum_j\int_{\mathbb R}f(x,j)L(x,j,t)\D x \text{ a.s.},
\end{equation}
where $f$ is 0 for the cemetery state. Compare this occupation density formula to~\eqref{eq:potential_appl}. 
Note that $L$ leads to more refined formula, but that is valid only for a free process and moreover $L$ lacks explicit expression.

Importantly, for every $x\in\mathbb R$ and every $j$: $\{L(x,j,t):t\geq 0\}$ is $\mathcal F_t$ adapted process, which increases only when $X=x,J=j$, and inherits the following additive property.
Consider a stopping time $\tau$ such that $X(\tau)=y\in\mathbb\R$ on $\{J(\tau)=i\}$ for some $y$ and $i$. 
Then on the event $\{J(\tau)=i\}$ the shifted process $\{L(x,j,\tau+s)-L(x,j,\tau):s\geq 0\}$ is independent of $\mathcal F_\tau$ and has the law of $\{L(x-y,j,s):s\geq 0\}$ given $\{J(0)=i\}$.

Let us define another important matrix-valued function $H(x)$ by \[H_{ij}(x)=\e_i L(0,j,\tau_x^+) \text { for }x>0\] and $H(0)=\lim_{x\downarrow 0}H(x)$.
In other words, $H(x)$ is a matrix of expected occupation densities at 0 for the time interval $[0,\tau_x^+)$. 
The following relation is obtained in~\cite{ivanovs_scale}
\begin{equation}\label{eq:W}
 W(x)=e^{-G x}H(x)
\end{equation}
for $x\geq 0$.
Let also $H=H(\infty)$ be the expected occupation density at 0. It is known that $H$ has finite entries and is invertible unless the process is non-defective and $\mu=0$.
 
In the rest of this section we suppose that the process is either defective or $\mu\neq 0$. 
Then $H$ has finite entries and by the additive property of $L$ we have for $x\geq 0$
\begin{equation}\label{eq:Hx}
H(x)=H-\p[J(\tau_x^+)]\p[J(\tx{-x})]H.\end{equation}
So combining~\eqref{eq:W} and~\eqref{eq:Hx} we get
\begin{equation}\label{eq:hitting}
 \p[J(\tx{x})]=e^{G x}-W(-x)H^{-1}
\end{equation}
for $x\leq 0$, but then it is clearly true also for $x>0$.
Finally, in~\cite{links_discrete} the following relation between $R$ and $G$ is obtained:
\begin{equation}\label{eq:R}
 R=H^{-1}GH.
\end{equation}

Often it is simpler and more convenient to work with defective processes.
 In order, to retrieve identities for non-defective processes, one can let the killing rates go to 0.
We follow this idea in the rest of the paper and provide comments concerning non-defective processes in Section~\ref{sec:non_def}.

\section{Terminating barriers}\label{sec:terminating}
Let us present results for all the cases when there are no reflecting barriers.
\begin{theorem}\label{thm:absorbing}
 For a defective MAP it holds that
\begin{align}
 u_{(-\infty,\infty)}(x)&=e^{G x}H-W(-x)=He^{R x}-W(-x),\label{thm:11}\\
u_{(-\infty,b|}(x)&=e^{G b}W(b-x)-W(-x),\label{thm:12}\\
u_{|-a,\infty)}(x)&=W(a)e^{R(x+a)}-W(-x),\label{thm:13}\\
u_{|-a,b|}(x)&=W(a)W(a+b)^{-1}W(b-x)-W(-x)\label{thm:14},
\end{align}
where $u_I(x)$ is a density of the measure $U_I(\D x)$ on the corresponding interval, see~\eqref{eq:potential}.
\end{theorem}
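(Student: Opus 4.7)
The plan is to exploit the occupation density $L$ reviewed in Section~\ref{sec:occ_dens}. For each of the four scenarios $I$,
\[
u_{I}(x)_{ij} = \e_i L(x,j,T_I),
\]
where $T_I$ is the first exit time from the open interval corresponding to $I$ (infinite in~\eqref{thm:11}); this is immediate from~\eqref{eq:occ_density} with $f(y,k)=\1{y\in A,k=j}$ and Fubini. Each of the four formulas will then follow by applying the strong Markov property together with the additive property of $L$ at an appropriately chosen stopping time, after which it remains to invoke the exit identities already recorded in Sections~\ref{sec:exit}--\ref{sec:occ_dens}.

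To prove~\eqref{thm:11}, I would split $L(x,j,\infty)$ at the hitting time $\tx{x}$. Since $L(x,j,\cdot)$ increases only at times when $X=x$, no occupation at level $x$ can accumulate before $\tx{x}$, i.e.\ $L(x,j,\tx{x})=0$. The additive property of $L$ at $\tx{x}$ then yields
\[
\e_i L(x,j,\infty) = \sum_k \p_i[J(\tx{x})=k]\,\e_k L(0,j,\infty) = \big(\p[J(\tx{x})]\,H\big)_{ij},
\]
and inserting~\eqref{eq:hitting}, together with the identity $GH=HR$ which is a rewriting of~\eqref{eq:R}, produces both equalities in~\eqref{thm:11}.

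The same idea handles~\eqref{thm:12} and~\eqref{thm:13}. For~\eqref{thm:12}, apply strong Markov at $\tau_b^+$: on $\{\tau_b^+<\infty\}$ the increment $L(x,j,\infty)-L(x,j,\tau_b^+)$ has the law of $L(x-b,j,\infty)$ for a shifted MAP starting from $(0,J(\tau_b^+))$, whence
\[
u_{(-\infty,\infty)}(x) = u_{(-\infty,b|}(x) + \p[J(\tau_b^+)]\,u_{(-\infty,\infty)}(x-b).
\]
Solving with $\p[J(\tau_b^+)]=e^{Gb}$ and~\eqref{thm:11} gives~\eqref{thm:12} after cancellation. For~\eqref{thm:13}, the decisive observation is that for $x>-a$ one has $L(x,j,\tau_{-a}^-)=L(x,j,\tx{-a})$: by spectral negativity the process stays $\leq-a$ on $[\tau_{-a}^-,\tx{-a}]$, so no occupation at level $x$ is picked up there. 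Hence $u_{|-a,\infty)}(x)=\e_i L(x,j,\tx{-a})$, and the analogous decomposition at $\tx{-a}$, combined with~\eqref{eq:hitting} and $H^{-1}e^{Gy}H=e^{Ry}$, produces~\eqref{thm:13}.

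Finally, I would bootstrap~\eqref{thm:14} from~\eqref{thm:13}. Applying strong Markov to $X_{|-a,\infty)}$ at $\tau_b^+$ gives
\[
u_{|-a,\infty)}(x) = u_{|-a,b|}(x) + \p[\tau_b^+<\tau_{-a}^-;J(\tau_b^+)]\,u_{|-(a+b),\infty)}(x-b),
\]
since on $\{\tau_b^+<\tau_{-a}^-\}$ the post-$\tau_b^+$ motion of $X_{|-a,\infty)}$ is, after shifting down by $b$, a copy of $X_{|-(a+b),\infty)}$ started from $(0,J(\tau_b^+))$. Inserting~\eqref{eq:Wexit} and applying~\eqref{thm:13} to both potential densities on the right then yields~\eqref{thm:14}: the terms involving $W(a)e^{R(x+a)}$ cancel, leaving $W(a)W(a+b)^{-1}W(b-x)-W(-x)$. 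The one delicate point is the identity $L(x,j,\tau_{-a}^-)=L(x,j,\tx{-a})$ for $x>-a$, which replaces a crossing time by a hitting time and so makes~\eqref{eq:hitting} directly usable; everything else reduces to algebraic manipulation of the already-established matrix identities.
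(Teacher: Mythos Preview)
Your proof is correct and follows essentially the same strategy as the paper: both rest on the occupation density $L$, its additive property at stopping times, and the hitting identity~\eqref{eq:hitting}. The organization differs only slightly: for~\eqref{thm:12}--\eqref{thm:14} the paper decomposes $\e_i L(x,j,T_I)$ from scratch in each case (for~\eqref{thm:14} via a three-way split at $\tx{x}$, $\tau_b^+$ and $\tx{-a}$, together with an auxiliary formula for $\p[\tx{-a}<\tau_b^+,J(\tx{-a})]$), whereas you bootstrap each identity from an already-proven one by a single strong-Markov decomposition at a boundary passage time; your route to~\eqref{thm:14} is in fact a bit shorter.
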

In the case of a \levy process this result is due to~\cite{bingham,suprun,bertoin_exp_decay}.
The methods employed in these papers are different from ours. In particular, the last paper relies on the Wiener-Hopf factorization.
It should be noted that for a \levy process $H=1/F'(\Phi)$ yielding
\[u_{(-\infty,\infty)}(x)=\frac{1}{F'(\Phi)}e^{-\Phi x}-W(-x),\]
which agrees with a representation appearing in~\cite[Thm.\ 1]{pistorius_potential}.

\begin{proof}[Proof of Theorem~\ref{thm:absorbing}]
When computing densities $u_I(x)$ we will not consider the case $x=0$, since $u_I(0)$ can be arbitrary.
 The potential measure of the free process $(X,J)$ is given by
\[U_{ij}(A)=\e_i\int_0^\infty \1{X(t)\in A,J(t)=j}\D t=\e_i\int_A L(x,j,\infty)\D x\]
according to~\eqref{eq:occ_density}, and hence it has a density $u_{ij}(x)=\e_i L(x,j,\infty)$.
By the additive property of $L$ and~\eqref{eq:hitting} we write 
\[u(x)=\p[J(\tx{x})]H=e^{G x}H-W(-x).\]
Together with~\eqref{eq:R} this proves~\eqref{thm:11}.

Similarly we find for $x<b$
\begin{align*}
&u_{(-\infty,b|}(x)=\left(\e_i L(x,j,\tau_b^+)\right)=\p[J(\tx{x})]H-\p[J(\tau_b^+)]\p[J(\tx{x-b})]H\\
&=e^{Gx}H-W(-x)-e^{Gb}(e^{G(x-b)}H-W(b-x))=e^{Gb}W(b-x)-W(-x).
\end{align*}
For $x>-a$ we have
\begin{align*}
&u_{|-a,\infty)}(x)=\left(\e_i L(x,j,\tau_{-a}^-)\right)=\p[J(\tx{x})]H-\p[J(\tx{-a})]\p[J(\tau_{a+x}^+)]H=\\
&e^{Gx}H-W(-x)-(e^{-Ga}-W(a)H^{-1})e^{G(x+a)}H=W(a)e^{R(x+a)}-W(-x), 
\end{align*}
where we used~\eqref{eq:R} and the fact that if a process has to pass over $-a$ and then return to $x>-a$ then it has to hit $-a$. 
Finally, for $-a<x<b$ we write
\begin{align*}
&u_{|-a,b|}(x)=\left(\e_i L(x,j,\tau_{-a}^-\wedge\tau_b^+)\right)=\p[J(\tx{x})]H-\\
&\p[\tau_b^+<\tx{-a},J(\tau_b^+)]\p[J(\tx{x-b})]H-\p[\tx{-a}<\tau_b^+,J(\tx{-a})]\p[J(\tau_{x+a}^+)]H.
\end{align*}
Observe that 
\[\p[\tx{-a}<\tau_b^+,J(\tx{-a})]=\p[J(\tx{-a})]-\p[\tau_b^+<\tx{-a},J(\tau_b^+)]\p[J(\tx{-a-b})]\] and use~\eqref{eq:Wexit} and~\eqref{eq:hitting} to obtain
after a number of cancellations
\[u_{|-a,b|}(\D x)=W(a)W(a+b)^{-1}W(b-x)-W(-x).\]
\end{proof}

It is noted that~\eqref{thm:14} can be used to obtain all the other results in Theorem~\ref{thm:absorbing} by taking limits as $a,b\rightarrow\infty$.
Computation of the limiting expressions is often not easy. We go the other way around and use Theorem~\ref{thm:absorbing} to state some useful limits.
\begin{cor}\label{cor:limits}
For a defective process and $y\geq 0$ the following limits hold as $x\rightarrow\infty$
\begin{align*}
 &e^{G x}W(x)\rightarrow H, &W(x)e^{R x}\rightarrow H,\\ 
&W(x)W^{-1}(x+y)\rightarrow e^{G y}, &W^{-1}(x+y)W(x)\rightarrow e^{R y}.
\end{align*} 
\end{cor}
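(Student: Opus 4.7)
The plan is to extract the four limits directly from Theorem~\ref{thm:absorbing} by pushing one terminating barrier at a time to infinity and equating the two explicit density formulas that result.

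I would first make precise the convergence of densities. For any fixed $x$, relaxing a terminating barrier only increases the occupation time of $x$ before first exit, so each $u_I(x)$ is nondecreasing in $a$ and in $b$, and as the barrier is pushed to infinity the potential measure $U_I$ converges monotonically to the one with that barrier removed. Combined with continuity of $W$, and hence of each $u_I$ in $x$, this promotes convergence of measures to pointwise convergence of the densities.

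I would then pair up the formulas. Comparing~\eqref{thm:12} and~\eqref{thm:14} for $0<x<b$ (where $W(-x)=\matO$) and letting $a\to\infty$ gives
\[
W(a)W(a+b)^{-1}W(b-x)\to e^{Gb}W(b-x);
\]
since $W(b-x)$ is invertible for $b-x>0$, cancelling on the right and relabelling $(a,b)\to(x,y)$ yields the third limit. Symmetrically, comparing~\eqref{thm:13} and~\eqref{thm:14} and letting $b\to\infty$ gives $W(a)W(a+b)^{-1}W(b-x)\to W(a)e^{R(x+a)}$; cancelling the left factor $W(a)$ (invertible for $a>0$) and setting $s=b-x$, $t=x+a$ produces the fourth limit. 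For the first limit I would compare~\eqref{thm:11} with~\eqref{thm:12} at $x>0$: letting $b\to\infty$ in $e^{G(x+s)}W(s)\to e^{Gx}H$ (with $s=b-x$) and premultiplying by $e^{-Gx}$ gives $e^{Gs}W(s)\to H$. The second limit is obtained from~\eqref{thm:11} and~\eqref{thm:13} in the same way, premultiplying by $e^{-Rx}$.

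The only real obstacle is the upgrade from monotone convergence of measures to pointwise convergence of densities; without this the algebraic manipulations above would only be statements about integrals. I would close this gap by noting that a pointwise nondecreasing sequence of continuous functions whose integrals tend to those of a continuous limit density must itself converge pointwise to that density, since the monotone pointwise limit exists, integrates correctly, and so agrees almost everywhere (hence everywhere by continuity) with the continuous limit density.
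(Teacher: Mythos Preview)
Your approach is correct and essentially the same as the paper's: both extract the four limits by comparing the explicit density formulas of Theorem~\ref{thm:absorbing} as one terminating barrier is sent to infinity. The paper justifies pointwise convergence of the densities a bit more directly---by invoking the occupation-density representation $u_I(x)=(\e_i L(x,j,\tau_I))$ from the proof of Theorem~\ref{thm:absorbing} and applying monotone convergence to the stopping time---and it also remarks that two of the limits, $e^{Gx}W(x)\to H$ and $W^{-1}(x+y)W(x)\to e^{Ry}$, follow immediately from $W(x)=e^{-Gx}H(x)$ together with~\eqref{eq:R}, without any density comparison at all.
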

\begin{proof}
The proof of Theorem~\ref{thm:absorbing} can be used to show that 
\[\lim_{a\rightarrow\infty}u_{|-a,\infty)}(x)=u_{(-\infty,\infty)}(x)\text{ for }x\neq 0.\]
Using~\eqref{thm:13} and~\eqref{thm:11} we see that $W(a)e^{R a}\rightarrow H$ as $a\rightarrow\infty$. The other limits are obtained in a similar way.
Let us also remark that two of these limits can be obtained directly: $e^{G x}W(x)=H(x)\rightarrow H$ and $W^{-1}(x+y)W(x)=H^{-1}(x+y)e^{G y}H(x)\rightarrow H^{-1}e^{G y}H=e^{R y}$.
\end{proof}


\section{Time reversal}\label{sec:time}
Consider a non-defective MAP $(X,J)$ and let $\bs\pi$ be the stationary distribution of $J$.
Assuming that $J(0)$ has the distribution $\bs \pi$, one defines for an arbitrary $t>0$ a time-reversed process by 
\begin{align*}
&\hat J(s)=J((t-s)-), &\hat X(s)=X(t)-X((t-s)-),
\end{align*}
where $s\in[0,t)$. It is known that $(\hat J(s),\hat X(s))_{s\in[0,t)}$ is again a spectrally negative MAP (no downward subordinators among its components).
This time-reversed MAP is characterized by 
\begin{align}\label{eq:timerev}
\hat F(\a)=\Dpi^{-1}F(\a)^T\Dpi,\end{align} and it can be continued to $s\in[0,\infty)$.
Now suppose that we kill both the original and the time-reversed processes using the same vector of killing rates~$\bs q$.
Then 
\[\hat F^{\bs q}(\a)=\hat F(\a)-\Delta_{\bs q}=\Dpi^{-1}(F(\a)-\Delta_{\bs q})^T\Dpi=\Dpi^{-1}F^{\bs q}(\a)^T\Dpi,\]
i.e.\ the same identity is true, where $\bpi$ corresponds to the non-defective transition rate matrix $F^{\bs q}(0)+\Delta_{\bs q}$.
In the following we drop the superscript~$\bs q$, where it does not lead to confusion, and use $\hat \p$ to denote the law of the (possibly killed) time-reversed process $(\hat X(s),\hat J(s))$.
Importantly, time-reversed quantities can be expressed through their original analogues.
\begin{prop}\label{prop:time_rev}
 The following identities hold true:
\begin{align}
&\hat R=\Dpi^{-1}G^T\Dpi, &\hat G=\Dpi^{-1}R^T\Dpi,\label{eq:timerev_G_R}\\
&\hat H=\Dpi^{-1}H^T\Dpi, &\hat W(x)=\Dpi^{-1}W(x)^T\Dpi.\label{eq:timerev_W}
\end{align}
\end{prop}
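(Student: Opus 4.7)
The plan is to derive the four identities in an order that lets later ones build on earlier ones: first $\hat W(x)$ by Laplace inversion, then $\hat G$ and $\hat R$ from the defining matrix equations, and finally $\hat H$ as an asymptotic limit. For the first step, the transform characterization $\int_0^\infty e^{-\alpha x} W(x)\,\D x = F(\alpha)^{-1}$ (for $\alpha$ large) applied to the reversed MAP, combined with $\hat F(\alpha) = \Dpi^{-1} F(\alpha)^T \Dpi$, gives $\hat F(\alpha)^{-1} = \Dpi^{-1}(F(\alpha)^{-1})^T \Dpi$, and Laplace uniqueness yields $\hat W(x) = \Dpi^{-1} W(x)^T \Dpi$.

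Next I would handle $\hat G$ and $\hat R$ together. Since $G$ is the unique right and $R$ the unique left solution of $F(-\cdot)=\matO$ in the class identified in \cite{breuer}, transposing the left identity $F(-R)=\matO$ and conjugating by $\Dpi$ turns it into $\hat F(-\Dpi^{-1} R^T \Dpi)=\matO$, read now as a right identity for $\hat F$. Uniqueness in the reversed class then forces $\hat G = \Dpi^{-1} R^T \Dpi$, and a symmetric treatment of $F(-G)=\matO$ yields $\hat R = \Dpi^{-1} G^T \Dpi$.

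With these in hand, the identity $\hat H = \Dpi^{-1} H^T \Dpi$ drops out by invoking Corollary~\ref{cor:limits} in reversed form: $\hat H = \lim_{x\to\infty} \hat W(x) e^{\hat R x}$. Substituting the formulas already proved gives
\[\hat W(x) e^{\hat R x} = \Dpi^{-1} W(x)^T \Dpi \cdot \Dpi^{-1} e^{G^T x} \Dpi = \Dpi^{-1}\bigl(e^{Gx} W(x)\bigr)^T \Dpi,\]
which tends to $\Dpi^{-1} H^T \Dpi$ since $e^{Gx} W(x) = H(x) \to H$. As a consistency check, the relation $R = H^{-1} G H$ from~\eqref{eq:R} combined with the three identities just obtained reproduces the fourth, confirming that no independent work is needed.

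The step I expect to be the main obstacle is the $\hat G$ / $\hat R$ derivation: the equation $F(-M) = \matO$ is not literally a polynomial identity (since $F$ is analytic), so one must carefully verify that transposition and conjugation by $\Dpi$ carry solutions within the class in which $G$ and $R$ are uniquely characterized. If this becomes unwieldy, a fallback is to pin down $\hat G$ and $\hat R$ purely from asymptotics, combining $W(x) \sim e^{-Gx} H$ and $W(x) \sim H e^{-Rx}$ (implicit in Corollary~\ref{cor:limits}) with the already-established formula for $\hat W(x)$; this bypasses the matrix equation and its uniqueness class entirely.
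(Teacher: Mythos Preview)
Your approach is essentially the same as the paper's: $\hat W$ via the Laplace transform characterization and~\eqref{eq:timerev}, $\hat G,\hat R$ via uniqueness in the class of solutions to $F(-\cdot)=\matO$ (the paper simply cites~\cite{links_discrete} for this), and $\hat H$ via the limit $\hat W(x)e^{\hat R x}\to\hat H$ from Corollary~\ref{cor:limits}. One small point you omit: since Corollary~\ref{cor:limits} is stated only for defective processes, the identity for $\hat H$ is first obtained in the defective case and then extended to non-defective processes by letting the killing rates tend to~$0$.
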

\begin{proof}
 Identities in the first line were obtained in~\cite{links_discrete} by using the fact that $\hat G$ is a unique solution (in a certain class) of $\hat F(-G)=\matO$, and similarly for $\hat R$.
In a similar way we compute using~\eqref{eq:timerev}
\[\int_0^\infty e^{-\a x}\Dpi^{-1}W(x)^T\Dpi\D x=\Dpi^{-1}[F(\a)^{-1}]^T \Dpi=\hat F(\a)^{-1},\]
which establishes the result for $\hat W(x)$, because its transform identifies the continuous $\hat W(x)$ uniquely.
Finally, we consider 
\[\hat W(x)e^{\hat R x}=\Dpi^{-1}W(x)^Te^{G^Tx}\Dpi=\Dpi^{-1}(e^{Gx}W(x))^T\Dpi,\]
which by letting $x\rightarrow\infty$ and using Corollary~\ref{cor:limits} yields $\hat H=\Dpi^{-1}H^T\Dpi$.
The last result is obtained for defective processes, but can be easily extended to non-defective ones by letting the killing rates approach 0.
\end{proof}
Let us complement Corollary~\ref{cor:limits} with another useful limiting result, 
which also demonstrates how time-reversed quantities can be used to change the order of matrix multiplication.
\begin{lemma}\label{lem:limits}
For a defective process the following limits hold as $x\rightarrow\infty$
\begin{align*}
&Z(x)F(0)^{-1}W(x)^{-1}\rightarrow G^{-1}, &W(x)^{-1}Z(x)F(0)^{-1}\rightarrow R^{-1}.
\end{align*}
\end{lemma}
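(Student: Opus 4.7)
My plan is to leverage the exit identity~\eqref{eq:exit_minus}, rearranged as
\[ Z(a) F(0)^{-1} = W(a) R^{-1} + \p[J(\tau_{-a}^-)] F(0)^{-1}, \]
and then to multiply on the appropriate side by $W(a)^{-1}$ so that each desired limit emerges directly. The right-hand side already exhibits $R^{-1}$, which will immediately produce the second limit, while for the first limit a conjugation argument combined with the similarity $R = H^{-1} G H$ from~\eqref{eq:R} will be needed.

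First I would establish that $W(a)^{-1} \to \matO$ as $a\to\infty$. Since the process is defective, the matrix $e^{Ga} = \p[J(\tau_a^+)]$ decays to $\matO$, so the eigenvalues of $G$ (and hence, by~\eqref{eq:R}, those of $R$) have strictly negative real parts, giving $e^{Ra}\to\matO$. Writing $W(a)^{-1} = e^{Ra}(W(a) e^{Ra})^{-1}$ and using $W(a) e^{Ra}\to H$ from Corollary~\ref{cor:limits} together with the invertibility of $H$, the decay $W(a)^{-1}\to\matO$ follows. Left-multiplying the rearranged identity by $W(a)^{-1}$ then yields
\[ W(a)^{-1} Z(a) F(0)^{-1} = R^{-1} + W(a)^{-1} \p[J(\tau_{-a}^-)] F(0)^{-1}, \]
and since the entries of $\p[J(\tau_{-a}^-)]$ lie in $[0,1]$ and $F(0)^{-1}$ is fixed, the second summand vanishes, establishing the second limit.

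For the first limit, right-multiplying the rearranged identity by $W(a)^{-1}$ instead produces
\[ Z(a) F(0)^{-1} W(a)^{-1} = W(a) R^{-1} W(a)^{-1} + \p[J(\tau_{-a}^-)] F(0)^{-1} W(a)^{-1}, \]
with the second summand again vanishing. The main obstacle is controlling the conjugation $W(a) R^{-1} W(a)^{-1}$; I would handle it by exploiting that $R^{-1}$ commutes with $e^{Ra}$, inserting $e^{Ra}e^{-Ra}=\matI$ to write
\[ W(a) R^{-1} W(a)^{-1} = \bigl( W(a) e^{Ra} \bigr) R^{-1} \bigl( W(a) e^{Ra} \bigr)^{-1} \;\longrightarrow\; H R^{-1} H^{-1} = G^{-1}, \]
where the limit uses Corollary~\ref{cor:limits} and continuity of matrix inversion near the invertible $H$, and the final equality uses~\eqref{eq:R}.
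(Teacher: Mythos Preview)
Your proof is correct. For the second limit you proceed essentially as the paper does, rearranging~\eqref{eq:exit_minus} and left-multiplying by $W(a)^{-1}$; the only difference is that you establish the stronger fact $W(a)^{-1}\to\matO$ (via $W(a)e^{Ra}\to H$ and $e^{Ra}\to\matO$), whereas the paper only notes that $W(a)^{-1}$ stays bounded (from $W(1)W(a)^{-1}$ being sub-stochastic) and relies implicitly on $\p[J(\tau_{-a}^-)]\to\matO$ for a defective process.

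For the first limit your argument is genuinely different from the paper's. The paper does not right-multiply and manipulate the conjugation $W(a)R^{-1}W(a)^{-1}$; instead it observes that $Z(x)F(0)^{-1}=F(0)^{-1}-\int_0^xW(y)\,\D y$ has a symmetric form under time reversal, so that $Z(x)F(0)^{-1}W(x)^{-1}=\Dpi^{-1}\bigl(\hat W(x)^{-1}\hat Z(x)\hat F(0)^{-1}\bigr)^T\Dpi$ by Proposition~\ref{prop:time_rev}, and then applies the already-proven second limit to the dual process to get $\Dpi^{-1}(\hat R^{-1})^T\Dpi=G^{-1}$. Your route via the insertion $W(a)R^{-1}W(a)^{-1}=(W(a)e^{Ra})R^{-1}(W(a)e^{Ra})^{-1}\to HR^{-1}H^{-1}=G^{-1}$ is more self-contained: it uses only Corollary~\ref{cor:limits} and the similarity~\eqref{eq:R}, and avoids the time-reversal machinery entirely. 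The paper's approach, on the other hand, highlights the duality between the two limits and is what one would naturally reach for once Proposition~\ref{prop:time_rev} is in place.
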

\begin{proof}
Note that $W(x)^{-1}$ stays bounded as $x\rightarrow\infty$, because for $x>1$ the matrix $W(1)W(x)^{-1}$ is a probability matrix.
Now the second limit follows readily from~\eqref{eq:exit_minus}. Using the relation $Z(x)F(0)^{-1}=F(0)^{-1}-\int_0^x W(y)\D y$ we write
\[Z(x)F(0)^{-1}W(x)^{-1}=\Dpi^{-1}(\hat W(x)^{-1}\hat Z(x)\hat F(0)^{-1})^T\Dpi\rightarrow\Dpi^{-1}\hat R^{-1}\Dpi,\]
which is $G^{-1}$. This concludes the proof.
\end{proof}

Let us present an application of time-reversal, which lays a basis for deriving a formula for the potential density in the case of two reflecting barriers.
\begin{lemma}\label{lem:time_rev}
 For a possibly defective $(X(s),J(s))$, any $t>0$ and $x\in[0,a+b]$ it holds that
\begin{align*}
 &\p_{a}[X_{[0,a+b]}(t)\geq x;J(t)]\\
&=\Dpi^{-1}\left(\hat\p[\tau\leq t,X(\tau)\geq x;J(t)]+\hat\p[\tau>t,X(t)+a\geq x;J(t)]\right)^T\Dpi.
\end{align*}
where $\tau=\inf\{s\geq 0:X(s)\notin [x-a-b,x)\}$.
\end{lemma}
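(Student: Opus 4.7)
The plan is to apply the MAP time-reversal identity on $[0,t]$ to a suitably chosen path event and identify the resulting reversed event with the union on the right-hand side. The underlying identity, which is the path-level precursor of Proposition~\ref{prop:time_rev}, states that for any measurable path event $A$,
\[\p[A;J(t)]=\Dpi^{-1}\hat\p[A^{\mathrm{rev}};\hat J(t)]^T\Dpi,\]
where $A^{\mathrm{rev}}$ is obtained by substituting $\hat X(s)=X(t)-X((t-s)-)$ and $\hat J(s)=J((t-s)-)$. This identity is standard in the stationary MAP setting and extends to the killed case exactly as in the proof of Proposition~\ref{prop:time_rev}.

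First I would normalize by translation: under $\p_a$ the process $X_{[0,a+b]}$ equals $a+\tilde X_{[-a,b]}$ in distribution for $\tilde X$ a free MAP started at $0$, so with $y:=x-a\in[-a,b]$ the LHS becomes $\p[\tilde X_{[-a,b]}(t)\geq y;J(t)]$. I would then decompose this event by the last time the doubly reflected process touches a barrier: setting $\sigma=\sup\{s\leq t:\tilde X_{[-a,b]}(s)\in\{-a,b\}\}$ (with $\sigma=0$ if the set is empty) yields three disjoint sub-events. No touch at all forces $\tilde X_{[-a,b]}\equiv\tilde X$ on $[0,t]$ together with $\tilde X(s)\in(-a,b)$ for all $s\leq t$ and $\tilde X(t)\geq y$. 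A last touch at $-a$ (respectively $b$) imposes that the free increments $\tilde X(s)-\tilde X(\sigma)$ for $s\in(\sigma,t]$ stay in $(0,a+b)$ (respectively $(-(a+b),0)$) and that $\tilde X(t)-\tilde X(\sigma)\geq y+a$ (respectively $\geq y-b$). The essential point, due to spectral negativity, is that the Skorokhod map locates each last touch via an extremum of the free path over an interval of length $a+b$, so each sub-event is expressed purely through $\tilde X$ without explicit reference to local times.

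Applying the time-reversal identity term by term then converts the forward ``last touch'' structure into a reversed ``first exit'' structure: the substitution $u=t-s$ turns the floating strips of length $a+b$ on the forward side into a single fixed strip $[y-b,y+a)=[x-a-b,x)$ for the reversed process starting at $0$. The no-touch case reversed gives $\{\hat\tau>t,\hat X(t)\geq y\}$, i.e.\ the second RHS term after substituting back $x=y+a$, and the two last-touch cases together reverse to $\{\hat\tau\leq t,\hat X(\hat\tau)\geq y+a\}$, which is the first RHS term; spectral negativity ensures that an exit from the fixed strip through the top is continuous with $\hat X(\hat\tau)=y+a$, while an exit through the bottom leaves $\hat X(\hat\tau)<y-b$ and so does not contribute to the first event. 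The main obstacle lies precisely in this collapsing step: the three forward sub-events rely on three different floating strips determined by $\tilde X(\sigma)$, and one must verify that under reversal they reassemble into the single fixed-strip dichotomy $\{\hat\tau\leq t\}\sqcup\{\hat\tau>t\}$ without double counting or residual, leveraging the continuous upward-crossing property of spectrally negative MAPs to control the two possible exit types of $\hat X$ from $[x-a-b,x)$.
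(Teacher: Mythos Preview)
Your last-touch decomposition does not give the piecewise correspondence you claim, and this is a genuine gap rather than a bookkeeping detail. A simple counterexample: take $a=b=1$, $x=1$ (so $y=0$), and let the free path $\tilde X$ on $[0,1]$ rise to $0.9$, fall to $-0.9$, and end at $0.5$, never touching $\pm 1$. This is your ``no-touch'' case and the LHS event holds. But the reversed path satisfies $\hat X(u)=0.5-\tilde X(1-u)$, which reaches $1.4\geq x$ well before time $1$, so $\hat\tau\leq t$ with upper exit---the reversed path lands in the \emph{first} RHS event, not the second. Conversely, let $\tilde X$ rise linearly to $1.5$ at time $0.5$ and fall to $0.5$ at time $1$; then the reflected path last touches $b=1$ at $\sigma=0.5$ and ends at $0$, so the LHS holds via your ``last touch at $b$'' case. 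Yet $\hat X(u)=0.5-\tilde X(1-u)$ stays in $[-1,1)$ throughout, giving $\hat\tau>t$. Hence neither ``no touch $\Leftrightarrow\hat\tau>t$'' nor ``touch $\Leftrightarrow\hat\tau\leq t$'' is valid, so your three forward sub-events do not reverse into the fixed-strip dichotomy term by term.

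The paper's argument avoids this trap by never decomposing the forward side. It proves the \emph{pathwise set equality}
\[
\{X_{[0,a+b]}(t)\geq x,\,X(0)=a\}=\{\hat\tau\leq t,\ \hat X(\hat\tau)\geq x\}\cup\{\hat\tau>t,\ \hat X(t)+a\geq x\}
\]
directly, reasoning only about $\hat X$ and the fixed strip $[x-a-b,x)$: an upper exit of $\hat X$ before $t$ forces the forward reflected value at $t$ to be $\geq x$ regardless of earlier barrier contacts; a lower exit forces it to be $<x$; and if there is no exit then the barriers are irrelevant and one simply compares $X(t)=\hat X(t)+a$ to $x$. Whether the forward reflected path touched a barrier plays no role in this trichotomy. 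Once the set identity is in hand, conditioning on $J(0),J(t)$ gives the matrix statement, and the defective case follows by intersecting both sides with the no-killing event (whose conditional probability is a functional of the occupation times and hence invariant under time reversal). Your plan would need to be rewritten along these lines; the last-touch variable $\sigma$ should be discarded.
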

This type of
identity was first noted in~\cite{lindley59} in the case of
a random walk with two reflecting barriers. A short derivation of
its continuous-time analogue is given in~\cite[Prop.~XIV.3.7]{APQ}, see also~\cite{loss_rates} for the case of
a Markov additive input. The above results concern stationary distribution. 
Here we allow for a finite time~$t$ and a defective process. 
\begin{proof}[Proof of Lemma~\ref{lem:time_rev}]
 First assume that $(X(s),J(s))$ is non-defective. 
Similarly to the proof of~\cite[Prop.~XIV.3.7]{APQ}, analysis of the sample paths of the original process (and hence of the corresponding time-reversed process) shows that
\begin{align}\label{eq:lem_tr}
 &\{X_{[0,a+b]}(t)\geq x, X(0)=a\}\\&=\{\hat\tau\leq t,\hat X(\hat \tau)\geq x\}\cup\{\hat\tau>t,\hat X(t)+a\geq x\},\nonumber
\end{align}
where $\hat X(0)=0$.
It is easy to understand this relation by considering a free process and shifting the reflecting boundaries as time evolves. 
Then exit of $\hat X(s)$ from $[x-a-b,x)$ through $x$ means that the original process will not be able to shift the boundaries high enough right before time $t$ to make $X_{[0,a+b]}(t)<x$.
In the same way exit over $x-a-b$ implies the converse.
In case of no exit from this interval, $X_{[0,a+b]}(t)\geq x$ if and only if $X(t)\geq x$, because the barriers can not be shifted far enough.
Conclude by observing that $X(t)=\hat X(t)+a$. 

Conditioning on the states of $J$ at times 0 and $t$ we arrive at
\begin{align*}
 &\p_{a,i}(X_{[0,a+b]}(t)\geq x|J(t)=j)\\
&=\hat\p_j(\tau\leq t,X(\tau)\geq x|J(t)=i)+\hat\p_j(\tau>t,X(t)+a\geq x|J(t)=i),
\end{align*}
which immediately yields the result for a non-defective process.

Let us kill the original non-defective process using the killing rates $\bs q$. 
Given the whole evolution of the non-defective process on $[0,t]$ (and hence its time-reversed counterpart), the probability of no killing is 
\[\exp\left(-\sum_i q_i\int_0^t\1{J(s)=i}\D s\right).\]
Note that this expression stays the same if we substitute $\hat J$ instead of $J$.
Finally, intersect both sides of~\eqref{eq:lem_tr} with the event of no killing in $[0,t]$, and condition on the evolution of the non-defective process 
to show that the result is also true in presence of killing.
\end{proof}

\section{Reflecting barriers}\label{sec:reflecting}
\begin{theorem}\label{thm:reflecting}
 For a defective MAP it holds that the potential measure $U_{[-a,b]}(\D x)$ has a density 
\begin{align}\label{eq:thm1}
u_{[-a,b]}(x)=-Z(a)F(0)^{-1}W(a+b)^{-1}W'(b-x)-W(-x)
\end{align}
on $(-a,b)$, and
\begin{align*}
 &U_{[-a,b]}\{-a\}=\matO, &U_{[-a,b]}\{b\}=-Z(a)F(0)^{-1}W(a+b)^{-1}W(0).
\end{align*}
\end{theorem}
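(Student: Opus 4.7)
The plan is to apply Lemma~\ref{lem:time_rev} to reduce the problem for the doubly reflected process to quantities involving the time-reversed process, where the potential measures with terminating barriers from Theorem~\ref{thm:absorbing} apply directly. I will first compute the matrix-valued tail $\overline U(x):=U_{[-a,b]}([x,b])$ on $[-a,b]$, from which the density is obtained by differentiating in $x$ and the atoms by comparing one-sided limits.

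Shifting by $a$ so that $X_{[-a,b]}(t)\geq x$ is equivalent to $X_{[0,a+b]}(t)\geq y$ with $y=x+a$, Lemma~\ref{lem:time_rev} and Fubini will give $\overline U(x)=\Dpi^{-1}(I_1+I_2)^T\Dpi$ where
\begin{align*}
I_1&=\int_0^\infty\hat\p[\tau\leq t,\hat X(\tau)\geq y;J(t)]\D t,\\
I_2&=\int_0^\infty\hat\p[\tau>t,\hat X(t)\geq x;J(t)]\D t,
\end{align*}
and $\tau$ is the exit time of $\hat X$ (started from $0$) from $[x-b,x+a)$. For $I_1$, I would use the strong Markov property at $\tau$ together with spectral negativity (so that exit through the top occurs at exactly $\hat X(\tau)=y$) to rewrite it as $\hat W(b-x)\hat W(a+b)^{-1}(-\hat F(0)^{-1})$, invoking~\eqref{eq:Wexit} for $\hat X$ and the identity $\int_0^\infty\hat\p[J(t)]\D t=-\hat F(0)^{-1}$ for the defective chain. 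For $I_2$, the integrand is the potential measure of $\hat X$ killed at exit from $[x-b,x+a)$, restricted to the slice $[x,x+a)$; applying~\eqref{thm:14} with appropriately shifted barriers and integrating in $\xi$ would produce $I_2=\hat W(b-x)\hat W(a+b)^{-1}\int_0^a\hat W(u)\D u-\int_0^{(-x)^+}\hat W(u)\D u$.

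Using $\hat Z(a)\hat F(0)^{-1}=\hat F(0)^{-1}-\int_0^a\hat W(u)\D u$, the two contributions collapse into
\[I_1+I_2=-\hat W(b-x)\hat W(a+b)^{-1}\hat Z(a)\hat F(0)^{-1}-\int_0^{(-x)^+}\hat W(u)\D u.\]
Transposing and conjugating by $\Dpi$ via Proposition~\ref{prop:time_rev}, together with the identity $\Dpi^{-1}[\hat Z(a)\hat F(0)^{-1}]^T\Dpi=Z(a)F(0)^{-1}$ (which holds summand by summand because each of $\hat F(0)^{-1}$ and $\int_0^a\hat W(u)\D u$ transforms individually), yields
\[\overline U(x)=-Z(a)F(0)^{-1}W(a+b)^{-1}W(b-x)-\int_0^{(-x)^+}W(u)\D u.\]
Differentiating in $x$ and observing that $\tfrac{\D}{\D x}\int_0^{(-x)^+}W(u)\D u=-W(-x)$ uniformly (since $W$ vanishes on $(-\infty,0)$) then produces~\eqref{eq:thm1}. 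For the atoms, evaluating at $x=b$ gives $\overline U(b)=-Z(a)F(0)^{-1}W(a+b)^{-1}W(0)$, which equals $U_{[-a,b]}\{b\}$; at the lower endpoint, both the total mass $\overline U(-a)=-F(0)^{-1}$ and the right limit $\overline U(-a^+)$ computed from the above formula reduce to $-F(0)^{-1}$, giving $U_{[-a,b]}\{-a\}=\matO$.

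The hard part will be the algebraic bookkeeping through the time-reversal: neither $\hat Z(a)$ nor $\hat F(0)^{-1}$ individually convert under $\Dpi^{-1}(\cdot)^T\Dpi$ to $Z(a)$ or $F(0)^{-1}$, but their product does, and the derivation must be arranged so that exactly this combination materializes. A secondary subtlety is the auxiliary term $-\int_0^{(-x)^+}W(u)\D u$, which vanishes for $x\geq 0$ but must be retained for $x\in(-a,0)$: its derivative supplies the $-W(-x)$ contribution to~\eqref{eq:thm1} in a case-free manner, and it is precisely this term that forces the cancellation giving $U_{[-a,b]}\{-a\}=\matO$.
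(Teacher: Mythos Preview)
Your argument is correct and follows the same overall strategy as the paper: invoke Lemma~\ref{lem:time_rev}, evaluate the two resulting terms using~\eqref{eq:Wexit} and~\eqref{thm:14} for the time-reversed process, and convert back via Proposition~\ref{prop:time_rev} and~\eqref{eq:timerev}. The one organizational difference is that the paper introduces an auxiliary exponential time $e_q$ on top of the already defective process, computes $U^q_{[-a,b]}[x-a,b]$ through~\eqref{eq:intro1}, and then lets $q\downarrow 0$, whereas you integrate the identity of Lemma~\ref{lem:time_rev} directly over~$t$ and handle $I_1$ via the strong Markov property at~$\tau$. Your version avoids the extra killing parameter and the limiting step, which is a mild simplification; the paper's detour through $e_q$ buys nothing here since the process is assumed defective from the outset.

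One small inaccuracy in your closing commentary: $\hat F(0)^{-1}$ \emph{does} transform to $F(0)^{-1}$ under $\Dpi^{-1}(\cdot)^T\Dpi$, immediately from~\eqref{eq:timerev}. The non-commutativity issue lies solely with $\hat Z(a)$, and your own parenthetical---that $\hat Z(a)\hat F(0)^{-1}=\hat F(0)^{-1}-\int_0^a\hat W(u)\,\D u$ converts summand by summand---is already the correct resolution.
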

\begin{proof}
  Take an independent r.v.\ $T\sim{\rm Exp}(q), q>0$. According to~\eqref{eq:intro1} and Lemma~\ref{lem:time_rev} we write for $x\in[0,a+b]$
\begin{align*}
 &U^q_{[-a,b]}[x-a,b]=\frac{1}{q}\p_{a}[X_{[0,a+b]}(T)\geq x;J(T)]\\
&=\frac{1}{q}\Dpi^{-1}\left(\hat\p[\tau\leq T,X(\tau)\geq x;J(T)]+\hat\p[\tau>T,X(T)+a\geq x;J(T)]\right)^T\Dpi.
\end{align*}
Furthermore, using the fact that $\tau=\tau_x^+\wedge\tau_{x-a-b}^-$ we write 
\begin{align*}
&\p[\tau\leq T,X(\tau)\geq x;J(T)]=\p^q[\tau_x^+<\tau_{x-a-b}^{-};J(\tau_x^+)]\p[J(T)]\\&=W^q(a+b-x)W^q(a+b)^{-1}q(q\matI-Q)^{-1},
\end{align*}
where $W^q$ corresponds to a process with additional killing of rate $q$ in each state.
Moreover, 
\begin{align*}
&\p[\tau>T,X(T)\geq x-a;J(T)]=q\int_{x-a}^xu^q_{|x-a-b,x|}(y)\D y\\&=q\int_{x-a}^xW^q(a+b-x)W^{q}(a+b)^{-1}W^q(x-y)-W^q(-y)\D y,
\end{align*}
where in the second equality we use~\eqref{thm:14}.
Taking time-reversed quantities and using \eqref{eq:timerev_W} we arrive at
\begin{align*}
&U^q_{[-a,b]}[x-a,b]=(q\matI-Q)^{-1}W^q(a+b)^{-1}W^q(a+b-x)\\&+\int_{x-a}^xW^q(x-y)W^{q}(a+b)^{-1}W^q(a+b-x)-W^q(-y)\D y\\
&=-Z^q(a)F^q(0)^{-1}W^{q}(a+b)^{-1}W^q(a+b-x)-\int_{(x-a)^-}^0W^q(-y)\D y.
\end{align*}
Finally, taking $q\downarrow 0$ we obtain
\[U_{[-a,b]}[x,b]=-Z(a)F(0)^{-1}W(a+b)^{-1}W(b-x)-\int_{x^-}^0W(-y)\D y,\]
which has a density on $(-a,b)$ and point masses as given in the statement of the theorem.
\end{proof}

\begin{cor}\label{cor:reflecting}
 For a defective MAP it holds that
\begin{align}
u_{[-a,b|}(x)&=Z(a)Z(a+b)^{-1}W(b-x)-W(-x),\label{eq:cor1}\\
u_{|-a,b]}(x)&=W(a)W'_+(a+b)^{-1}W'(b-x)-W(-x),\label{eq:cor2}\\
u_{[-a,\infty)}(x)&=Z(a)F(0)^{-1}Re^{R(a+x)}-W(-x),\label{eq:cor3}\\
u_{(-\infty,b]}(x)&=-G^{-1}e^{G b}W'(b-x)-W(-x).\label{eq:cor4}
\end{align}
Additionally,
\begin{align*}
&U_{[-a,b|}\{-a\}=\matO, &U_{|-a,b]}\{b\}=W(a)W'_+(a+b)^{-1}W(0),\\
&U_{[-a,\infty)}\{-a\}=\matO, &U_{(-\infty,b]}\{b\}=-G^{-1}e^{G b}W(0).
\end{align*}
\end{cor}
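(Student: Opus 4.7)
The plan is to derive the four identities in two stages: the one-sided reflection cases~\eqref{eq:cor3}-\eqref{eq:cor4} by taking limits of Theorem~\ref{thm:reflecting}, and the mixed reflection/termination cases~\eqref{eq:cor1}-\eqref{eq:cor2} by a path decomposition at the reflecting barrier combined with matrix algebra.

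For~\eqref{eq:cor4} I would send $a\to\infty$ in Theorem~\ref{thm:reflecting}. Factoring the density as
\[\bigl(-Z(a)F(0)^{-1}W(a)^{-1}\bigr)\bigl(W(a)W(a+b)^{-1}\bigr)W'(b-x)-W(-x),\]
Lemma~\ref{lem:limits} gives the first factor $\to G^{-1}$ and Corollary~\ref{cor:limits} gives the second $\to e^{G b}$, yielding $-G^{-1}e^{G b}W'(b-x)-W(-x)$; the point mass at $b$ follows identically with $W(0)$ in place of $W'(b-x)$, while the point mass at $-a$ vanishes in the limit. For~\eqref{eq:cor3} I would send $b\to\infty$; the key step is the limit $W(a+b)^{-1}W'(b-x)\to -Re^{R(a+x)}$. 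I would establish this by writing $W(y)=e^{-Gy}H(y)$ from~\eqref{eq:W}, differentiating to express $W'_+$ in terms of $H$ and $H'_+$, and combining the asymptotics $H(y)\to H$, $W(y)e^{R y}\to H$ and $R=H^{-1}GH$ provided by Corollary~\ref{cor:limits}.

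For~\eqref{eq:cor1} I would decompose $X_{[-a,b|}$ at its first hit of the reflecting barrier $-a$. Up to $\tx{-a}\wedge\tau_b^+$ the reflected/terminating process coincides with $X_{|-a,b|}$, and by the strong Markov property and shift invariance the process conditionally restarts at $-a$ in the $[0,a+b|$ regime. This yields
\[u_{[-a,b|}(x)=u_{|-a,b|}(x)+\p[\tx{-a}<\tau_b^+;J(\tx{-a})]\,u^{(0)}_{[0,a+b|}(x+a),\]
where~\eqref{eq:Wexit}, \eqref{eq:exit_minus} and~\eqref{eq:hitting} combine to give $\p[\tx{-a}<\tau_b^+;J(\tx{-a})]=Z(a)-W(a)W(a+b)^{-1}Z(a+b)$. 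After separately treating the base case $a=0$ via the excursion theory of the process reflected at $0$ (giving $u_{[0,c|}(y)=Z(c)^{-1}W(c-y)$, consistent with $Z(0)=\matI$), substituting and collecting terms collapses algebraically to the prefactor $Z(a)Z(a+b)^{-1}$. For~\eqref{eq:cor2} the argument is dual: the decomposition takes place at the reflecting upper barrier $b$ via the local time there, and because the spectrally negative process can creep into $b$ the associated normalization produces $W'_+(a+b)^{-1}$ in place of $W(a+b)^{-1}$, with the point mass $W(a)W'_+(a+b)^{-1}W(0)$ arising as the expected total local time at $b$ multiplied by $W(0)$.

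The main obstacle lies in~\eqref{eq:cor2}: rigorously setting up the local-time decomposition at the reflecting upper barrier and correctly identifying the normalization as $W'_+$ rather than $W$ requires care about creeping. A secondary difficulty is justifying the derivative asymptotic used in~\eqref{eq:cor3}, since pointwise convergence of $W(y)e^{R y}$ does not by itself imply convergence of its derivative; here the factorisation $W(y)=e^{-Gy}H(y)$ is essential, as it reduces matters to the better-behaved monotone function $H(y)$.
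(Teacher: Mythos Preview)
Your treatment of~\eqref{eq:cor4} matches the paper exactly. For the other three identities, however, the paper takes a different and considerably simpler route that sidesteps all of the difficulties you flag.

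For~\eqref{eq:cor1} and~\eqref{eq:cor2} the paper does \emph{not} build up from $u_{|-a,b|}$ by decomposing at the reflecting barrier. Instead it starts from the already--established two--sided reflected density $u_{[-a,b]}$ of Theorem~\ref{thm:reflecting} and decomposes at the first passage of the \emph{one--sided reflected} process over the \emph{other} barrier:
\[
U_{[-a,b]}(A)=U_{[-a,b|}(A)+\p[J(\underline\sigma_{-a,b})]\,U_{[-a-b,0]}(A-b),
\]
\[
u_{[-a,b]}(x)=u_{|-a,b]}(x)+\p[J(\overline\sigma_{-a,b})]\,u_{[0,a+b]}(x+a),
\]
where $\underline\sigma_{-a,b}$ (resp.\ $\overline\sigma_{-a,b}$) is the first passage over $b$ (resp.\ below $-a$) of the process reflected at $-a$ (resp.\ at $b$). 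The matrices $\p[J(\underline\sigma_{-a,b})]=Z(a)Z(a+b)^{-1}$ and $\p[J(\overline\sigma_{-a,b})]=Z(a)+W(a)W'_+(a+b)^{-1}W(a+b)F(0)$ are quoted from~\cite{ivanovs_scale}. One then solves for $u_{[-a,b|}$ and $u_{|-a,b]}$ by subtraction; two lines of algebra give the result, and the appearance of $W'_+(a+b)^{-1}$ in~\eqref{eq:cor2} is simply inherited from the cited formula for $\p[J(\overline\sigma_{-a,b})]$, with no creeping or local--time analysis needed. Your scheme, by contrast, requires an independent proof of the base case $u_{[0,c|}(y)=Z(c)^{-1}W(c-y)$ via excursion theory, which you do not supply and which is essentially as hard as the target identity; and your ``dual'' argument for~\eqref{eq:cor2} is left entirely unspecified. (There is also a slip: the restart of the reflected process occurs at $\tau_{-a}^-$ in state $J(\tau_{-a}^-)$, not at $\tx{-a}$, and it is $\p[\tau_{-a}^-<\tau_b^+;J(\tau_{-a}^-)]$ that equals $Z(a)-W(a)W(a+b)^{-1}Z(a+b)$.)

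For~\eqref{eq:cor3} the paper again avoids your derivative asymptotic: once~\eqref{eq:cor1} is in hand, one lets $b\to\infty$ there, writing $Z(a+b)^{-1}W(b-x)=\bigl(Z(a+b)^{-1}W(a+b)\bigr)\bigl(W(a+b)^{-1}W(b-x)\bigr)$ and using Lemma~\ref{lem:limits} and Corollary~\ref{cor:limits} for the two factors. No limit involving $W'$ is needed. Your proposed route through $W(y)=e^{-Gy}H(y)$ would require $H'(y)\to\matO$, which does not follow from $H(y)\to H$ alone and is not established in the paper.
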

In the case of a \levy process this result is due to~\cite{korolyuk_suprun,pistorius_exit}, but see also~\cite{doney}
for an alternative proof.
\begin{proof}[Proof of Corollary~\ref{cor:reflecting}]
Proof of~\eqref{eq:cor1}. \\
Note that for any Borel $A\subset[-a,b]$ we have
\[U_{[-a,b]}(A)=U_{[-a,b|}(A)+\p[J(\underline\sigma_{-a,b})]U_{[-a-b,0]}(A-b),\]
where $\underline\sigma_{-a,b}$ is the first passage time over level $b$ of a process reflected at $-a$. 
It is known that $\p[J(\underline\sigma_{-a,b})]=Z(a)Z(a+b)^{-1}$, see~\cite[Thm.\ 2]{ivanovs_scale}.
Hence for $x\in(-a,b)$
\begin{align*}
 &u_{[-a,b|}(x)=u_{[-a,b]}(x)-Z(a)Z(a+b)^{-1}u_{[-a-b,0]}(x-b)=\\
&=-Z(a)F(0)^{-1}W(a+b)^{-1}W'(b-x)-W(-x)\\
&Z(a)Z(a+b)^{-1}(Z(a+b)F(0)^{-1}W(a+b)^{-1}W'(b-x)+W(b-x))\\&=
Z(a)Z(a+b)^{-1}W(b-x)-W(-x).
\end{align*}
Moreover, $U_{[-a,b|}\{-a\}=\matO$ is implied by $U_{[-a,b]}\{-a\}=\matO$.
There is no point mass at $b$, since the process exceeds $b$ immediately after hitting it.

Proof of~\eqref{eq:cor2}. 
Similarly to the above, for $x\in(-a,b)$ we have
\[u_{[-a,b]}(x)=u_{|-a,b]}(x)+\p[J(\overline\sigma_{-a,b})]u_{[0,a+b]}(x+a),\]
where $\overline\sigma_{-a,b}$ is the first passage below $-a$ of a process reflected at $b$.
From \cite[Thm.\ 6]{ivanovs_scale} we find
\[\p[J(\overline\sigma_{-a,b})]=Z(a)+W(a)W'_+(a+b)^{-1}W(a+b)F(0)\]
and hence
\begin{align*}
 &u_{|-a,b]}(x)=-Z(a)F(0)^{-1}W(a+b)^{-1}W'(b-x)-W(-x)\\&(Z(a)+W(a)W'_+(a+b)^{-1}W(a+b)F(0))F(0)^{-1}W(a+b)^{-1}W'(b-x)\\
&=W(a)W'_+(a+b)^{-1}W'(b-x)-W(-x).
\end{align*}
Similarly we find that $U_{|-a,b]}\{b\}=W(a)W'_+(a+b)^{-1}W(0)$ and that there is no point mass at $-a$.

Finally, \eqref{eq:cor3} is obtained from~\eqref{eq:cor1} by taking the limit as $b\rightarrow\infty$ and using Lemma~\ref{lem:limits} and Corollary~\ref{cor:limits}.
In particular, we use $Z(a+b)^{-1}W(a+b)\rightarrow F(0)^{-1}R$ and $W(a+b)^{-1}W(b-x)\rightarrow e^{R(a+x)}$.
Moreover,~\eqref{eq:cor4} is obtained from~\eqref{eq:thm1} by letting $a\rightarrow\infty$, and the expressions for point masses follow similarly.
Let us remark that \eqref{eq:cor3} and \eqref{eq:cor4} can be obtained directly using a time-reversal argument similar to that in Lemma~\ref{lem:time_rev}.
\end{proof}

\section{Non-defective processes}\label{sec:non_def}
Let us discuss potential measures of a non-defective process.
One may kill the process at rate $q>0$ and then let $q\downarrow 0$ to retrieve the original process back.
It is known that all the quantities $F(0),G,R,H,W(x),W'(x),Z(x)$ are continuous in $q\geq 0$, and so the identities should still hold.
The problem is that some limiting matrices may be infinite or singular, and that potential measures can be infinite as well.
It is rather clear which cases are to be excluded, see also~\cite[Prop.\ XI.2.10]{APQ}.
\begin{cor}\label{cor:non_defective}
The results of Thm.~\ref{thm:absorbing} and Corollary~\ref{cor:reflecting} hold true for a non-defective process apart from the following cases.
 \begin{itemize}
\item $\mu<0$: exclude \eqref{eq:cor3};
\item $\mu=0$: exclude \eqref{thm:11},\eqref{eq:cor3},\eqref{eq:cor4};
  \item $\mu>0$: exclude \eqref{eq:cor4}.
 \end{itemize}
Moreover, when $\mu>0$ the term $F(0)^{-1}R$ in~\eqref{eq:cor3} should be understood in the limiting sense (as $q\downarrow 0$).
\end{cor}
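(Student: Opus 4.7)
The plan is to deduce the non-defective formulas by a limiting argument from their defective counterparts. I would fix the underlying non-defective MAP, kill it at a common independent exponential rate $q > 0$ to obtain a defective MAP, apply Theorem~\ref{thm:absorbing} and Corollary~\ref{cor:reflecting} to this killed process, and then let $q \downarrow 0$. On the left-hand side, monotone convergence yields
\[U^q_I(A) = \int_0^\infty e^{-qt}\,\p[X_I(t)\in A, J(t)]\,\D t \;\uparrow\; U_I(A)\]
entrywise, so the limit recovers the desired non-defective potential whenever $U_I$ is finite on bounded sets. On the right-hand side, the quantities $F(\a), G, R, H, W(x), W'_+(x), Z(x)$ depend continuously on $q \geq 0$, so the identities pass to the limit entry by entry whenever no matrix blows up or becomes singular in a damaging way.

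Next I would identify the excluded cases through a sample-path classification matched to the algebraic singularities on the RHS. For the free process~\eqref{thm:11}, $U_I$ is infinite on any bounded set precisely when $X$ is recurrent, i.e.\ when $\mu = 0$, and this matches the divergence of $H$. For a process reflected at $-a$ with no upper barrier~\eqref{eq:cor3}, the reflected process spends infinite time near the lower boundary unless it escapes to $+\infty$, so $U_I$ is finite only when $\mu > 0$; the symmetric sample-path argument for~\eqref{eq:cor4} excludes $\mu \geq 0$. In every other formula of Theorem~\ref{thm:absorbing} and Corollary~\ref{cor:reflecting} either the interval is bounded or a terminating barrier kills the process on exit, so $U_I$ is automatically finite and the continuity argument applies directly.

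The remaining delicate point is the meaning of $F(0)^{-1} R$ in~\eqref{eq:cor3} when $\mu > 0$. Here $F(0)$ is singular because $F(0)\one = \bs 0$, and $R$ is singular by the discussion preceding~\eqref{eq:R}. For $q > 0$, however, $(F^q(0))^{-1} R^q$ is perfectly well-defined, and since $U^q_{[-a,\infty)}$ remains finite on bounded sets as $q \downarrow 0$ while every other factor in~\eqref{eq:cor3} is continuous and non-degenerate, the product $(F^q(0))^{-1} R^q$ must converge; one then takes this limit as the definition of $F(0)^{-1} R$ in the statement.

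The main obstacle, to my mind, is verifying directly that the singular parts of $(F^q(0))^{-1}$ and $R^q$ cancel as $q \downarrow 0$. A clean approach would exploit the defining relation $F^q(-R^q) = \matO$, written as $F(-R^q) = q\matI$, together with the identity~\eqref{eq:R} and the limits in Corollary~\ref{cor:limits} and Lemma~\ref{lem:limits}, which already establish analogous cancellations of singular factors. Once that limit is in hand, the rest of the corollary is a routine continuity exercise.
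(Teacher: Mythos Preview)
Your approach is essentially the paper's own: the paper does not give a formal proof of this corollary but rather the paragraph immediately preceding it, which says to kill at rate $q>0$, use the known continuity of $F(0),G,R,H,W(x),W'(x),Z(x)$ in $q\geq 0$, and identify the excluded cases as those where a limiting matrix blows up or becomes singular or the potential measure is infinite (with a reference to \cite[Prop.~XI.2.10]{APQ} for the path classification). Your proposal is a more careful and explicit version of exactly this argument---in particular, your use of monotone convergence on the left-hand side and your sample-path identification of the excluded cases are spelled out where the paper merely asserts that ``it is rather clear which cases are to be excluded''.

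One small comment on your last paragraph: your indirect argument that $(F^q(0))^{-1}R^q$ must converge because the full expression in~\eqref{eq:cor3} does requires that $Z(a)$ be left-cancellable, which you have not checked (though it is true). The paper, for its part, does not even attempt to justify existence of this limit---it simply declares that the term ``should be understood in the limiting sense''---so you are already being more careful than the source.
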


Potential densities can also be used to obtain limiting distributions of reflected processes.
According to~\eqref{eq:intro1} the limiting distribution is given by \[\lim_{q\downarrow 0}qU_I^q(\D x),\]
and hence it can be proper only if the limiting measure is infinite. In certain sense these results will complement Corollary~\ref{cor:non_defective}.

Let us first establish a simple lemma. We recall that $\bpi$ is the stationary distribution of $J(t),t\geq 0$, and we let 
$\bpi_G$ denote the stationary distribution of $J(\tau_x^+),x\geq 0$ when $\mu\geq 0$.
\begin{lemma}\label{lem:limits2}
For a non-defective process every row of $-\lim_{q\downarrow 0}qF^q(0)^{-1}$ equals $\bpi$.
If in addition $\mu\geq 0$ then every row of $-\lim_{q\downarrow 0}q(G^q)^{-1}$ equals $\mu\bpi_G$.
\end{lemma}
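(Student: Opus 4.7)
The first identity is a straightforward resolvent limit. Since $F(0)$ is the irreducible rate matrix of $J$, its eigenvalue $0$ is simple with left eigenvector $\bpi$, right eigenvector $\bs 1$, and spectral projector $\bs 1\bpi$. The plan is to use
\[-F^q(0)^{-1}=(q\matI-F(0))^{-1}=\int_0^\infty e^{-qt}e^{F(0)t}\D t,\]
so that $-qF^q(0)^{-1}=\p[J(e_q)]$ for $e_q\sim\mathrm{Exp}(q)$ independent of $J$. Ergodicity of $J$ gives $\p[J(t)]\to\bs 1\bpi$ as $t\to\infty$, and boundedness of this matrix yields $\p[J(e_q)]\to\bs 1\bpi$ by dominated convergence, which reads row-by-row as the first claim.

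For the second identity, the plan is to combine a probabilistic interpretation of $(G^q)^{-1}$ with an Abelian theorem; a direct spectral perturbation would require computing the derivative at $q=0$ of the perturbed $0$-eigenvalue of $G$, which is cumbersome since $\bpi_G F'(0)$ is not generally proportional to $\bpi_G$. For $q>0$, the spectrum of $G^q$ lies in the open left half-plane, so
\[-(G^q)^{-1}=\int_0^\infty e^{G^q x}\D x=\int_0^\infty\p^q[J(\tau_x^+)]\D x.\]
Since $\p^q[J(\tau_x^+)=j\mid J(0)=i]=\p_i[\tau_x^+<e_q,\,J(\tau_x^+)=j]$ and $\oX$ is continuous by spectral negativity, the change of variable $x=\oX(s)$ gives entry-wise
\[-(G^q)^{-1}_{ij}=\e_i\int_0^{e_q}\1{J(s)=j}\,\D\oX(s)=\int_0^\infty e^{-qs}\,\D m_{ij}(s),\]
with $m_{ij}(s):=\e_i\int_0^s\1{J(u)=j}\,\D\oX(u)$.

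It then remains to show $m_{ij}(s)/s\to\mu\bpi_G(j)$ as $s\to\infty$, because an Abelian theorem for Laplace--Stieltjes transforms $q\int_0^\infty e^{-qs}\D m_{ij}(s)=q^2\int_0^\infty e^{-qs}m_{ij}(s)\D s\to\mu\bpi_G(j)$ then concludes. Under $\mu\geq 0$ the ladder chain $\{J(\tau_x^+):x\geq 0\}$ is non-defective, irreducible with stationary distribution $\bpi_G$, so by its ergodic theorem $\oX(s)^{-1}\int_0^s\1{J(u)=j}\,\D\oX(u)\to\bpi_G(j)$ a.s., while $\oX(s)/s\to\mu$ a.s. Multiplying these yields the a.s.\ limit $\mu\bpi_G(j)$, and passage to expectations uses $L^1$ domination by $\oX(s)$. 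The main technical obstacle I anticipate is precisely this last step in the borderline case $\mu=0$, where one needs uniform integrability of $\oX(s)/s$; this can be handled through truncation together with the known sublinear growth of $\e_i\oX(s)$ in the oscillating regime.
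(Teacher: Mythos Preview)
Your first part is identical to the paper's. For the second part, the core probabilistic ingredients are the same as in the paper: the resolvent representation $-(G^q)^{-1}=\int_0^\infty e^{G^q x}\,\D x$, the ergodic theorem for the ladder chain $J(\tau_x^+)$, the a.s.\ convergence $\oX(t)/t\to\mu$, and domination by $\oX$ to pass from an a.s.\ limit to expectations. The organization differs, though. The paper works directly with the $x$-integral, couples $e_q=e_1/q$ on one probability space so that
\[
q\int_0^\infty \1{\tau_x^+<e_q,\,J(\tau_x^+)=j}\,\D x
=\frac{\int_0^{\oX(e_1/q)}\1{J(\tau_x^+)=j}\,\D x}{\oX(e_1/q)}\cdot q\,\oX(e_1/q)
\to \bpi_G(j)\cdot \mu e_1
\]
a.s., and then invokes the extended (Pratt-type) dominated convergence theorem using $\e[q\,\oX(e_q)]\to\mu$ as the converging bound. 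You instead change variables to obtain $\int_0^\infty e^{-qs}\,\D m_{ij}(s)$ and appeal to an Abelian theorem, which shifts the burden to proving $m_{ij}(s)/s\to\mu\bpi_G(j)$; there you still need to pass the a.s.\ limit through expectation, and your domination by $\e_i\oX(s)$ plays exactly the role that $\e[q\,\oX(e_q)]\to\mu$ plays in the paper (the two facts are equivalent via Karamata, since $\e_i\oX(\cdot)$ is monotone). So your argument is correct but takes a small detour through Laplace--Stieltjes transforms where the paper's coupling is more direct; neither approach avoids the $\mu=0$ integrability issue, and both resolve it through the same control on $\e_i\oX$.
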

\begin{proof}
Recall that $-qF^q(0)^{-1}=\p[J(e_q)]$, which immediately completes the proof of the first part. For the second part compute
\begin{equation}\label{eq:lim_exp}
-q(G^q)^{-1}=q\int_0^\infty e^{G^q x}\D x=\left(\e_iq\int_0^\infty \1{\tau_x^+<e_q,J(\tau_x^+)=j}\D x\right) 
\end{equation}
and observe that 
\[\int_0^\infty \1{\tau_x^+<e_q,J(\tau_x^+)=j}\D x/\overline X(e_q)\rightarrow (\bpi_G)_j \text{ a.s.}\]
Furthermore, it is known that $\overline X(t)/t\rightarrow \mu$ a.s.\ as $t\rightarrow\infty$, and also we can put $e_q=e_1/q$.
Hence the proof is complete if we can show that the limit as $q\downarrow 0$ can be taken under the expectation sign in~\eqref{eq:lim_exp}.
This follows by the extended dominated convergence theorem and the fact that $\lim_{q\downarrow 0}\e(q\overline X(e_q))=\mu$, which is easy to establish.
\end{proof}

We are ready to present results concerning limiting distributions.
\begin{cor}\label{cor:lim_distr}
The following limiting distributions exist, do not depend on the initial values $X(0)$ and $J(0)$, and are given by:
\begin{align*}
&\p_i[X_{[0,b]}(\infty)\in \D x,J(\infty)]=\bpi W(b)^{-1}(W'(b-x)\D x+W(0)\delta_b(\D x)),&\\
&\p_i[X_{[0,\infty)}(\infty)\in \D x,J(\infty)]=-\bpi Re^{R x}\D x, &\mu<0,\\
&\p_i[X_{(-\infty,0]}(\infty)\in \D x,J(\infty)]=\mu\bpi_G(W'(-x)\D x+W(0)\delta_0(\D x)), &\mu>0.\\
\end{align*}
\end{cor}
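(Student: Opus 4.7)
\textbf{Proof plan for Corollary~\ref{cor:lim_distr}.} The starting point is identity~\eqref{eq:intro1}, which for a constant killing vector ${\bs q}=q\one$ reads $qU^q_I(A)=\p[X_I(e_q)\in A,J(e_q)]$. Since $e_q\Rightarrow\infty$ as $q\downarrow 0$ and standard ergodicity of the reflected MAP (recurrence of $J$ on finitely many states together with the bounded/one-sided reflection that prevents escape) implies the existence and uniqueness of a limiting distribution independent of $X(0)$ and $J(0)$, it suffices to compute the pointwise $q\downarrow 0$ limit of $qu^q_I(x)$ and of the point masses from Theorem~\ref{thm:reflecting} and Corollary~\ref{cor:reflecting}. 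The key analytic input is Lemma~\ref{lem:limits2}: every row of $-qF^q(0)^{-1}$ converges to $\bpi$, and (when $\mu\geq 0$) every row of $-q(G^q)^{-1}$ converges to $\mu\bpi_G$. All other objects $F^q(0), G^q, R^q, W^q(x), W_+^{q\prime}(x), Z^q(x)$ are continuous in $q\geq 0$, so only the two genuinely singular factors need Lemma~\ref{lem:limits2}.

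For the doubly reflected case $X_{[0,b]}$ I specialize Theorem~\ref{thm:reflecting} to $a=0$ (noting $Z^q(0)=\matI$ and $W^q(-x)=\matO$ for $x>0$) to get $u^q_{[0,b]}(x)=-F^q(0)^{-1}W^q(b)^{-1}W^{q\prime}(b-x)$ with point mass $-F^q(0)^{-1}W^q(b)^{-1}W^q(0)$ at~$b$. Multiplying by $q$ and passing to the limit yields $\one\bpi W(b)^{-1}\bigl(W'(b-x)\,\D x+W(0)\delta_b(\D x)\bigr)$, which gives the first stated formula (each row equals $\bpi W(b)^{-1}(\cdots)$, hence independent of the starting state $i$).

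For $X_{[0,\infty)}$ with $\mu<0$ I specialize~\eqref{eq:cor3} to $a=0$ to obtain $u^q_{[0,\infty)}(x)=F^q(0)^{-1}R^q e^{R^qx}$ for $x>0$, and multiplying by $q$ and letting $q\downarrow 0$ gives $-\one\bpi Re^{Rx}$; here $R$ is non-singular because $\mu<0$, so the limit is a proper density. For $X_{(-\infty,0]}$ with $\mu>0$ I specialize~\eqref{eq:cor4} to $b=0$ to get $u^q_{(-\infty,0]}(x)=-(G^q)^{-1}W^{q\prime}(-x)-W^q(-x)$ with point mass $-(G^q)^{-1}W^q(0)$ at $0$; multiplying by $q$, using that $qW^q(-x)\to 0$ (since $W^q$ is continuous in $q$), and applying the second half of Lemma~\ref{lem:limits2}, I obtain $\mu\one\bpi_G\bigl(W'(-x)\,\D x+W(0)\delta_0(\D x)\bigr)$.

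The main obstacle is checking that the $q\downarrow 0$ limits of rescaled potential measures indeed deliver \emph{proper} probability distributions and agree with the ergodic limit. Continuity in $q$ plus Lemma~\ref{lem:limits2} handles the convergence pointwise, but one must argue that the limiting measures have total mass one and that the weak limit as $q\downarrow 0$ of $\p[X_I(e_q)\in\cdot,J(e_q)]$ coincides with the ordinary limit $\p[X_I(\infty)\in\cdot,J(\infty)]$. Both points follow from positive recurrence of the reflected MAP in each of the three regimes (bounded interval, negative drift with lower reflection, positive drift with upper reflection) together with the fact that $e_q/(1/q)\Rightarrow e_1$, so the mixing of $X_I(t)$ up to the exponential time $e_q$ inherits the ergodic limit. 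Independence from $X(0)$ then comes for free by uniqueness of the invariant law, and independence from $J(0)$ is visible in the final formulas as a rank-one row structure.
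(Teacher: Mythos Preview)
Your proposal is correct and follows essentially the same route as the paper: specialize Theorem~\ref{thm:reflecting} (with $a=0$) and Corollary~\ref{cor:reflecting} (\eqref{eq:cor3} with $a=0$, \eqref{eq:cor4} with $b=0$), multiply by $q$, invoke Lemma~\ref{lem:limits2} to evaluate the singular factors $-qF^q(0)^{-1}$ and $-q(G^q)^{-1}$, and let $q\downarrow 0$. The only minor difference is that for existence and independence from initial values the paper appeals to the time-reversal argument of Lemma~\ref{lem:time_rev}, whereas you invoke general ergodicity of the reflected MAP; your discussion of why the $q\downarrow 0$ limit of $qU^q_I$ actually yields the ergodic limit is also more explicit than the paper's.
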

\begin{proof}
The fact that these limiting distributions exist and do not depend on the initial values can be established in various ways, for example we can use
the time-reversal argument, see Lemma~\ref{lem:time_rev}.
From Theorem~\ref{thm:reflecting} and~\eqref{eq:intro1} we see that 
\[\p[X_{[0,b]}(\infty)\geq x,J(\infty)]=-\lim_{q\downarrow 0}qF^q(0)^{-1}W(b)^{-1}W(b-x),\]
which according to Lemma~\ref{lem:limits2} yields
\[\p_i[X_{[0,b]}(\infty)\geq x,J(\infty)]=\bpi W(b)^{-1}W(b-x)\]
and then the first result follows.
The second and third result are obtained in a similar way using Corollary~\ref{cor:reflecting}. We remark that these results can be obtained directly from Lemma~\ref{lem:time_rev}. 
\end{proof}

Let us comment on the results of Corollary~\ref{cor:lim_distr}.
Firstly, the first result has the following alternative form
\[\p_i[X_{[-a,0]}(\infty)\in \D x,J(\infty)]=\bpi W(a)^{-1}(W'(-x)\D x+W(0)\delta_0(\D x)).\]
The second and third result are known in the literature but in a different form.
In particular, for $\mu<0$ it is known that
$(X_{[0,\infty)}(\infty)|J(\infty)=i)$ has a phase type distribution with transition rate matrix $\hat G$ started in state $i$, see e.g.~\cite[Cor.\ 2.23]{thesis}. In other words,
the column vector of densities is given by
\[\p(X_{[0,\infty)}(\infty)\in \D x|J(\infty))=e^{\hat G x}(-\hat G\one)\D x.\]
which leads to our result using Lemma~\ref{prop:time_rev}.
Considering the third result we compute the transform for large enough $\a>0$:
\[\int_{-\infty}^{0+} e^{\a x}\mu\bpi_G(W'(-x)\D x+W(0)\delta_0(\D x))=\mu\bpi_G\a F(\a)^{-1},\] 
where we use integration by parts showing that 
\begin{align}\label{eq:int_by_parts}
W(0)+\int_0^{\infty} e^{-\a x}W'(x)\D x=\a\int_0^{\infty} e^{-\a x}W(x)\D x=\a F(\a)^{-1}.
\end{align}
This confirms~\cite[Prop.\ 4.19]{thesis} building upon~\cite{asm_kella_multidim}.

\section{Concluding remarks}\label{sec:remarks}
\subsection{Exit from an open interval}
One may choose another way of terminating a process at a barrier. So far we have assumed that termination at $b$ occurs at the first exit from $(-\infty,b]$, 
and at $-a$ at the first exit from $[-a,\infty)$. One may choose to use open intervals $(-\infty,b)$ and $(-a,\infty)$ instead. 
Since $X(t)$ can not hit a level without immediately passing over it, the result of Theorem~\ref{thm:absorbing} is still true apart from the case when $a=0$,
i.e.\ the process is started at the boundary ($b=0$ does not cause problems). In this case the process is killed at time 0 leading to 0 measures.
Also $u_{[-a,b|}(x)$ is still given by~\eqref{eq:cor1}. There is however a substantial difference in the expression for $u_{|-a,b]}(x)$,
because there might be an excursion of $X(t)$ from its maximum of height $a+b$ with positive probability, see~\cite[Thm.\ 5]{ivanovs_scale}. 
The new expression is given by 
\[\lim_{c\uparrow a}u_{|-c,b]}(x)=W(a)W'_-(a+b)^{-1}W'(b-x)-W(-x),\]
where $W'_-$ denotes a left derivative. So the formula stays the same if the left and right derivatives of $W$ coincide at $a+b$.

\subsection{Known transforms}
There are some results available in the literature concerning transforms of a one-sided reflection of $X(t)$ at an exponential time.
In particular~\cite[Cor.\ 4.21]{thesis} states that 
\begin{align*}
&-\frac{1}{q}\e[e^{\a X(e_q)-(\a+\beta)\overline X(e_q)};J(e_q)]F^q(\a)=\matI+(\a+\beta)(G^q-\beta\matI)^{-1},\\ 
&-\frac{1}{q}F^q(\a)\e[e^{-\beta X(e_q)+(\a+\beta)\underline X(e_q)};J(e_q)]=\matI+(\a+\beta)(R^q-\beta\matI)^{-1}
\end{align*}
for $\a,\beta\geq 0$,
see also~\cite{dieker_mandjes} and~\cite{kypr_palmowski_fluct} for results of a similar type.
In particular, putting $\beta=0$ in the first equation and $\a=0$ in the second we obtain
\begin{align*}
&-\int_{-\infty}^{0+}e^{\a x}U_{(-\infty,0]}(\D x)F(\a)=\matI+\a G^{-1},\\ 
&-F(0)\int_{0-}^{\infty}e^{-\beta x}U_{[0,\infty)}(\D x)=\matI+\beta(R-\beta\matI)^{-1},
\end{align*}
where the common killing rate $q>0$ is implicit. 

Let us check~\eqref{eq:cor3} and \eqref{eq:cor4} against these results.
Firstly, for large enough $\a>0$ we compute similarly to~\eqref{eq:int_by_parts}
\begin{align*}
 \int_{-\infty}^0 e^{\a x}(-G^{-1}W'(-x)-W(-x))\D x-G^{-1}W(0)=-(\a G^{-1}+\matI)F(\a)^{-1}.
\end{align*}
Secondly, for $\beta\geq 0$ we compute
\begin{align*}
 &\int_0^{\infty} e^{-\beta x}F(0)^{-1}Re^{Rx}\D x=-F(0)^{-1}R(R-\beta\matI)^{-1}\\&=-F(0)^{-1}(\matI+\beta(R-\beta\matI)^{-1}),
\end{align*}
which confirms the above result.

\section*{Acknowledgments}
This work was supported by the Swiss National Science Foundation Project 200020-143889 and the EU-FP7 project Impact2C.

\bibliographystyle{plain}
\bibliography{Potential_measures.bib}
\end{document}